\newtheorem{teo}{\bf Theorem}
\newtheorem{cor}[teo]{\bf Corollary}
\newtheorem{lema}[teo]{\bf Lemma}
\theoremstyle{remark}
\theoremstyle{definition}
\newtheorem{defi}[teo]{\bf Definition}
\newcommand{\catname}[1]{{\normalfont\textbf{#1}}}
\newcommand{\FinSet}{\catname{FinSet}}
\newcommand{\Top}{\catname{Top}}
\newcommand{\Set}{\catname{Set}}
\newcommand{\BA}{\catname{BA}}
\newcommand{\ifff}{\Leftrightarrow}
\newcommand{\fp}{\mathfrak{p}}
\newcommand{\Fp}{\mathfrak{P}}
\newcommand{\Oo}{\mathcal{O}}
\title{Codensity and Stone spaces}
\author{Andrei Sipo\unichar{537}}
\address{Simion Stoilow Institute of Mathematics of the Romanian Academy, P.O. Box 1-764, 014700 Bucharest, Romania.}
\address{Faculty of Mathematics and Computer Science, University of Bucharest, Academiei 14, 010014 Bucharest, Romania.}
\email{Andrei.Sipos@imar.ro}
\thanks{2010 {\it Mathematics Subject Classification}. 18C15, 06E15.}
\keywords{codensity, monad, ultrafilter, Stone space, frame, sober space.}
\begin{document}

\begin{abstract}
We present a detailed computation of two codensity monads associated to two canonical functors -- the inclusion functor of $\FinSet$ into $\Top$ and the inclusion functor of the category of the powers of the Sierpi\'nski space into $\Top$. We show that the categories of algebras of the two monads are the categories of Stone spaces and of sober spaces, respectively. A new motivation for defining these classes of spaces is therefore obtained.
\end{abstract}

\maketitle

\section{Introduction}

The codensity monad was first defined by Kock \cite{kock}, and afterwards applied to the ultrafilter case by Kennison and Gildenhuys \cite{kg}. Recently, there has been a resurgent interest in the concept, following Leinster's 2013 paper \cite{lein}. He pioneered an integration-oriented approach, which he successfully used to obtain a codensity perspective on double dual vector spaces and ultraproducts. (We shall not make use of that view here, sticking instead to Kock's original definition.) That approach is more amenable to measure-theoretic notions -- Leinster's suggestion of Giry's measure space monad, for example, was later picked up by Sturtz \cite{sturtz} and Avery \cite{avery}. Our goal will be to exhibit codensity monads whose categories of algebras are the category of Stone spaces and that of sober spaces, respectively.

Let $\mathcal{B}$ be an essentially small category (i.e., its skeleton is a small category) and $\mathcal{A}$ be a locally small category that has all small limits. If $G: \mathcal{B} \to \mathcal{A}$ is a functor, we have an induced functor
$$h_G : \mathcal{A} \to (\Set^{\mathcal{B}})^{op}$$
defined (on objects) by
$$h_G(A)(B) = \text{Hom}_{\mathcal{A}}(A,G(B))$$
where $A$ is an object of $\mathcal{A}$ and $B$ one of $\mathcal{B}$. Under our hypotheses, this functor has a right adjoint, denoted by $h'_G$ and defined on an arbitrary $Y$ in $\Set^{\mathcal{B}}$ as:
\begin{equation}\label{lim}
h'_G(Y) = \varprojlim_{B \in \mathcal{B},\ y \in Y(B)} G(B)
\end{equation}
and therefore the underlying endofunctor (of $\mathcal{A}$) of the associated monad\linebreak$(T^G, \eta^G, \mu^G)$ is given by:
$$T^G(A) = \varprojlim_{B \in \mathcal{B},\ f:A\to G(B)} G(B)$$

We now explain the limit notation. We do so for the general case in \eqref{lim}. Let $\mathcal{J}$ be the category whose objects are pairs $(B,y)$ where $B \in \mathcal{B}$ and $y \in Y(B)$, and so if $(B,y)$ and $(B',y')$ are two such objects, their Hom-set $\text{Hom}_{\mathcal{J}}((B,y),(B',y'))$ contains all morphisms $\phi : B \to B'$ in $\mathcal{B}$ such that $Y(\phi)(y) = y'$. We now define the functor $D:\mathcal{J} \to \mathcal{A}$ by $D((B,y))=G(B)$ and $D(\phi)=G(\phi)$. It is of this diagram $D$ that the limit is taken. (The limit exists because $\mathcal{B}$, and therefore also $\mathcal{J}$, is equivalent to a small category, and $\mathcal{A}$ has all small limits.)

In the particular case of the definition of $T^G$, the objects of $\mathcal{J}$ are pairs $(B,f)$ where $B \in \mathcal{B}$ and $\ f:A\to G(B)$, and so if $(B,f)$ and $(B',f')$ are two such objects, $\text{Hom}_{\mathcal{J}}((B,f),(B',f'))$ contains all morphisms $\phi : B \to B'$ in $\mathcal{B}$ such that $G(\phi) \circ f = f'$. In other words, $\mathcal{J}$ is the comma category given by the constant functor $!_A$ (from the terminal category) and our functor $G$. The functor $D$ is constructed similarly. (The projections will be denoted simply by $\pi_f$, as any $f$ contains the information necessary for recovering the corresponding $B$.)

The way that $T^G$ acts on morphisms is an immediate application of the limit's universal property: if $\phi: X \to Y$ is a morphism of $\mathcal{A}$, $T^G(\phi)$ is the unique arrow such that for every $(B,f) \in \mathcal{J}$, the following equation holds:
$$\pi_f \circ T^G(\phi) = \pi_{f\circ\phi}$$

We now turn to the unit and multiplication of the monad. The object $A$ with the canonical morphisms $f: A \to D((B,f))$ forms an obvious cone for $D$, and so we can define $\eta^G_A$ as the unique morphism such that for all $(B,f)$, we have $\pi_f \circ \eta^G_A = f$. Similarly, $\mu^G_A : T^G(T^G(A)) \to T^G(A)$ must satisfy the equations $\pi_f \circ \mu^G_A = \pi_{\pi_f}$ (the last projection arising from the second application of the functor $T^G$). The components of the natural transformations have therefore been given.

This monad is called the {\bf codensity monad} of $G$ (an immediate corollary is that, whenever $G$ has a left adjoint, the codensity monad of $G$ is precisely the monad corresponding to {\it that} adjunction). In the case where the adjoint $h'_G$ above does not exist (i.e., the assumptions about the categories $\mathcal{A}$ and $\mathcal{B}$ are weaker), we notice that the monad can still be defined using just the limit, if its existence can be ensured: the monad laws are immediate consequences of the equations above. (The monad in Section~\ref{sec:sober} will be defined in this way. There are also other options available, see \cite{leincat}.) If the unit of the monad is an isomorphism (or, equivalently, if $h_G$ is fully faithful), the functor $G$ is said to be {\bf codense}.

The result by Kennison and Gildenhuys \cite{kg}, referenced by Leinster \cite{lein}, says that the codensity monad of the inclusion functor $\FinSet \hookrightarrow \Set$ has as its underlying functor the {\it ultrafilter functor} $U$, defined on a set $A$ as:
$$U(A)=\text{the set of ultrafilters in the Boolean algebra $P(A)$},$$
where $P(A)$ is the powerset of $A$. 

In Section~\ref{sec:stone}, we see what happens when we replace $\Set$ by $\Top$, the category of all topological spaces -- namely, the image of the corresponding monad is the category of Stone spaces. In Section~\ref{sec:inter}, we motivate the definition of a sober space and the construction of another subcategory of $\Top$. In Section~\ref{sec:sober}, we prove the codensity link between the two.

A monad $(T,\eta,\mu)$ on a category $\mathcal{C}$ is called {\bf idempotent} if its counit $\mu$ is a natural isomorphism. In that case, it follows by the monad unit law that for any object $X$ of $\mathcal{C}$, we have that $\mu_X = (\eta_{T(X)})^{-1} = (T(\eta_X))^{-1}$. Also, by \cite[Proposition 4.2.3, (1)]{Bor2}, the forgetful functor $U: \mathcal{C}^T \to \mathcal{C}$ is full and faithful, therefore it induces an isomorphism between its essential image and $\mathcal{C}^T$. We show that this essential image equals the essential image of $T$. Firstly, let $(A, \alpha: T(A) \to A)$ be a $T$-algebra. By \cite[Proposition 4.2.3, (4)]{Bor2}, $A$ is isomorphic to $T(A)$ and hence it is in the essential image of $T$. Conversely, we take an object $X$ of $\mathcal{C}$ and we seek to prove that $(T(X), \mu_X : T^2(X) \to T(X))$ is a $T$-algebra. From the fact that $\mu_X = (\eta_{T(X)})^{-1}$ it follows that $id_{T(X)} = \mu_X \circ \eta_{T(X)}$, i.e. the first algebra law. In addition, $\mu_{T(X)} = (T(\eta_{T(X)}))^{-1}= T(\eta_{T(X)}^{-1}) = T(\mu_X)$, so $\mu_X \circ \mu_{T(X)} = \mu{X} \circ T(\mu_X)$, i.e. the second algebra law. We have shown that $\mathcal{C}^T$ is equivalent to the essential image of $T$. Since in the sequel we shall compute essential images of {\it idempotent} codensity monads, we shall also obtain characterizations of their categories of algebras.

We note that these results can be obtained in a shorter way, using previously obtained results. This alternate proof, pointed out by an anonymous reviewer, is sketched out in Section~\ref{sec:alter}. However, we feel that a detailed computation can shed some light on some subtle issues invoked, directly or indirectly, in both arguments.

\section{The case of Stone spaces}\label{sec:stone}

From now on, we will focus on the codensity monad of the inclusion functor $\FinSet \hookrightarrow \Top$, and as the class of objects of $\FinSet$'s skeleton is $\mathbb{N}$ and $\Top$ has all small limits, we are under the strict hypotheses considered so far.

Let us review some general theory. The clopen sets of a topological space have the structure of a Boolean algebra, and this gives a functor $C: \Top~\to~\BA^{op}$. In the other direction, the set of ultrafilters of a Boolean algebra $B$ can be enriched with a natural topology. A basis for that topology is, for example, composed of the sets $D_x = \{ U \mid x \in U \}$, for every $x \in B$. This space is compact, Hausdorff, and has a basis of clopen sets (since $D_{\neg x}$ is the complement of $D_x$), and therefore a {\bf Stone space}. We have obtained another functor $S: \BA^{op}~\to~\Top$. For every Boolean algebra $B$, $CS(B)$ is isomorphic to $B$ (this, and what follows, is part of {\it Stone duality}). Note that $SC$ is an endofunctor of $\Top$, which acts on morphisms $f: X \to Y$ as (for any ultrafilter $U$):
$$SC(f)(U)=\{A \in C(Y) \mid \{x \in X \mid f(x) \in A \} \in U \}.$$
Also, every space $X$ is equipped with a map $\eta_X : X \to SC(X)$, defined by:
$$\eta_X(x)=\{A \in C(X) \mid x \in A\}$$
and if $X$ is already a Stone space, $\eta_X$ is an isomorphism (all these maps are the components of a natural transformation $\eta$). All these basic facts can be found, for example, in the book of Bell and Slomson \cite{bellslom}, along with their proofs.

Another result that we shall need is the following lemma, a slight variant of one of Galvin and Horn \cite[p. 521]{gh}.

\begin{lema}\label{lgh}
Let $B$ be a Boolean algebra and $U \subseteq B$. Then the following are equivalent:
\begin{enumerate}[(i)]
\item $U$ is an ultrafilter;
\item if $x_0, x_1, x_2 \in B$ such that $x_0 \vee x_1 \vee x_2= 1$ and for every $0 \leq i < j < 3$, $x_i \wedge x_j = 0$, then there is exactly one $i$ such that $x_i \in U$.
\end{enumerate}
\end{lema}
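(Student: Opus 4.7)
My proof plan is a direct characterization argument splitting into the two implications.

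For (i) $\Rightarrow$ (ii), I would invoke the standard fact that an ultrafilter $U$ is a proper prime filter in $B$. Since $1 = x_0 \vee x_1 \vee x_2 \in U$, two applications of primeness ($a \vee b \in U \Rightarrow a \in U$ or $b \in U$) yield that at least one of $x_0, x_1, x_2$ lies in $U$. Uniqueness is then immediate: if $x_i, x_j \in U$ with $i \neq j$, then $x_i \wedge x_j = 0$ would lie in $U$, contradicting properness.

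The harder direction is (ii) $\Rightarrow$ (i), where I must extract each axiom of an ultrafilter by feeding (ii) a carefully chosen triple. Applying (ii) to $(1, 0, 0)$ shows that $0 \notin U$ (otherwise the two repeated zeros would contribute two distinct indices), and hence $1 \in U$. Applying (ii) to $(x, \neg x, 0)$ then shows that exactly one of $x$ and $\neg x$ lies in $U$ for every $x \in B$. For upward closure, assume $x \in U$ and $x \leq y$; one checks that $(x, \neg y, \neg x \wedge y)$ is a valid triple for (ii) (pairwise disjoint, with join $1$, both facts using $x \leq y$ together with distributivity), and since $x \in U$ this forces $\neg y \notin U$, whence $y \in U$ by the preceding observation. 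For closure under binary meets, take $x, y \in U$ and apply (ii) to $(x \wedge y,\ x \wedge \neg y,\ \neg x)$: with upward closure now available, $x \wedge \neg y \in U$ would imply $\neg y \in U$ contradicting $y \in U$, while $\neg x \in U$ would contradict $x \in U$, so the only option left is $x \wedge y \in U$. Together these facts present $U$ as a proper filter containing exactly one of $x$ and $\neg x$ for every $x$, which is the usual definition of an ultrafilter.

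The main obstacle is the design of the two nontrivial triples used for upward closure and meet closure; once the join identity and pairwise disjointness have been verified (exploiting $x \leq y$ and the Boolean distributive laws), the remainder of the reverse implication is pure case elimination.
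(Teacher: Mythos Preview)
Your proof is correct and follows essentially the same route as the paper's: the same triples $(1,0,0)$, $(x,\neg x,0)$, and $(x,\,\neg x\wedge y,\,\neg y)$ handle the first three steps, and for meet closure you use the paper's triple $(x\wedge y,\,x\wedge\neg y,\,\neg x)$ as well. The only difference is a mild streamlining in the last step: the paper invokes a second auxiliary triple to rule out $x\wedge\neg y\in U$, whereas you eliminate that case directly via the upward closure already proved.
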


\begin{proof}
We do only the hard direction, i.e. $(ii) \Rightarrow (i)$.

\begin{enumerate}[Step 1.]
\item By writing $1$ as $1 \vee 0 \vee 0$, it is clear that $1 \in U$ and $0 \notin U$.
\item Now, for $x \in B$, by writing $1$ as $x \vee \neg x \vee 0$, we see that exactly one of $x$ and $\neg x$ is in $U$.
\item Then, we show that if $x \in U$ and $x \leq y$ then $y \in U$. We write $1$ as $x \vee (y\wedge \neg x) \vee \neg y$, and since $x \in U$, $\neg y \notin U$, and so, by Step 2, $y \in U$.
\item Finally, we prove that if $x$ and $y$ are in $U$, $x \wedge y$ is in $U$. We firstly write $1$ as $(x \wedge \neg y) \vee (\neg x \wedge \neg y) \vee y$, and so, because $y \in U$, $x \wedge \neg y$ in not in $U$. Then, $1 = (x \ \wedge \neg y) \vee (x \wedge y) \vee \neg x$, and the first and the last are not in $U$, hence $x \wedge y \in U$.
\end{enumerate}
So $U$ is an ultrafilter.
\end{proof}

It is clear that this lemma holds for any finite set of at least three elements of $B$. (The easy, $(ii) \Rightarrow (i)$ direction, also holds for less than three elements.)

The main result of this section is the following:

\begin{teo}
The codensity monad of the inclusion functor $\FinSet \hookrightarrow \Top$ is $(SC,\eta,\mu)$, where, for every $X$, $\mu_X$ is defined as $(\eta_{SC(X)})^{-1}$.
\end{teo}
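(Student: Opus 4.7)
The plan is to identify $T^G(X) = \varprojlim_{(B,\, f : X \to B)} B$ with $SC(X)$ as topological spaces and then match up the unit and multiplication. First, I exhibit $SC(X)$ as a cone over the defining diagram. For a continuous $f : X \to B$ with $B$ finite (hence discrete), the fibres $f^{-1}(b)$ are clopen and partition $X$, so by the finite-partition version of Lemma \ref{lgh} there is for each ultrafilter $U \in SC(X)$ a unique $b \in B$ with $f^{-1}(b) \in U$; set $\pi_f(U) = b$. Continuity is immediate since $\pi_f^{-1}(\{b\}) = D_{f^{-1}(b)}$ is basic open. The cone condition $G(\phi) \circ \pi_f = \pi_{G(\phi) \circ f}$ is a bookkeeping check: for $\phi : B \to B'$, the fibre $(G(\phi) \circ f)^{-1}(b')$ is the disjoint union of the $f^{-1}(b)$ with $\phi(b) = b'$, and since exactly one such fibre lies in $U$, we get $\phi(\pi_f(U)) = \pi_{\phi \circ f}(U)$.

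Next, I show this cone is limiting. Given any cone $(Y, \{q_f\}_f)$ in $\Top$, I associate to each $y \in Y$ a subset $U_y \subseteq C(X)$ by declaring $A \in U_y$ iff $q_{\chi_A}(y) = 1$, where $\chi_A : X \to \{0,1\}$ is the continuous characteristic map of the clopen set $A$. To verify that $U_y$ is an ultrafilter, I apply Lemma \ref{lgh}: given a clopen partition $A_0 \sqcup A_1 \sqcup A_2 = X$, define $f : X \to \{0,1,2\}$ by $f(x) = i$ on $A_i$, and consider the three maps $\phi_i : \{0,1,2\} \to \{0,1\}$ sending $i$ to $1$ and the other two values to $0$. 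The cone condition forces $q_{\chi_{A_i}}(y) = \phi_i(q_f(y))$, so exactly one of the $q_{\chi_{A_i}}(y)$ equals $1$ -- precisely clause (ii). The map $y \mapsto U_y$ is continuous since the preimage of the basic open $D_A$ is $q_{\chi_A}^{-1}(\{1\})$, and it is the unique mediating morphism because any candidate $y \mapsto V_y$ must satisfy $\pi_{\chi_A}(V_y) = q_{\chi_A}(y)$, forcing $A \in V_y$ iff $q_{\chi_A}(y) = 1$.

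Finally, the codensity unit $\eta^G_X : X \to SC(X)$ is characterized by $\pi_f \circ \eta^G_X = f$; a direct check against the Stone duality unit $\eta_X(x) = \{A : x \in A\}$ gives $\pi_f(\eta_X(x)) = f(x)$, so the two coincide. Because $SC(X)$ is itself a Stone space, $\eta_{SC(X)}$ is an isomorphism, and the monad unit law $\mu^G_X \circ \eta^G_{SC(X)} = \mathrm{id}_{SC(X)}$ then forces $\mu^G_X = (\eta_{SC(X)})^{-1}$, simultaneously exhibiting the monad as idempotent. The main obstacle is verifying that $U_y$ really is an ultrafilter in the second step, which is precisely where the three-element formulation of Lemma \ref{lgh} is essential -- a purely binary version would not yield the finite-meet closure of $U_y$.
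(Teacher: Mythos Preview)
Your approach is essentially identical to the paper's: same projections $\pi_f$ (the paper calls them $\psi_f$), same mediating map $y\mapsto U_y$ built from characteristic functions, same appeal to the three-element Galvin--Horn lemma, and the same identification of $\eta$. Your derivation of $\mu^G_X$ from the monad unit law is a minor variant of the paper's direct verification of $\psi_f\circ\mu_X=\psi_{\psi_f}$, and both are fine.

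There is one genuine gap in the universality step. You argue that $y\mapsto U_y$ is \emph{the unique} mediating morphism, but the justification you give (``any candidate must satisfy $\pi_{\chi_A}(V_y)=q_{\chi_A}(y)$'') only establishes uniqueness; you never check that your map is a mediating morphism at all, i.e.\ that $\pi_f(U_y)=q_f(y)$ for an \emph{arbitrary} $f:X\to B$, not just for $f=\chi_A$. The paper fills this by first proving the equivalence $f^{-1}(b)\in U_y \iff q_f(y)=b$ via $\chi_{f^{-1}(b)}=\chi_{\{b\}}\circ f$ and the cone condition for $(Y,\{q_f\})$; this then gives $\pi_f(U_y)=q_f(y)$ directly. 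You have all the ingredients for this (indeed your ultrafilter verification already uses the cone condition in exactly this way for the map into $\{0,1,2\}$), so the fix is routine, but as written the limit claim is not yet established. A smaller omission along the same lines: you do not verify that $SC$ agrees with $T^G$ on morphisms, i.e.\ that $\pi_g\circ SC(f)=\pi_{g\circ f}$; the paper checks this explicitly.
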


\begin{proof}
We begin by computing the endofunctor component, i.e. to show that:
$$SC(X)=\varprojlim_{B \in \FinSet,\ f:X\to B} B$$
Firstly, we define, for every such $f:X \to B$, the map $\psi_f : SC(X) \to B$ by:
$$\psi_f(U)=b \ifff f^{-1}(b)\in U,$$
for every and $U \in SC(X)$ and $b \in B$ (these maps will be the projections). Since $U$ is an ultrafilter, by the generalized variant of Lemma~\ref{lgh}, there is a single $b$ with this property and so the map is well-defined. It is also continuous, because for every $b$, we have:
\begin{align*}
\psi_f^{-1}(b)&=\{U \mid \psi_f(U)=b \} \\
&= \{ U \mid f^{-1}(b) \in U \} \\
&=D_{f^{-1}(b)}
\end{align*}
We claim that $(SC(X), \{\psi_f\}_{f:X \to B})$ is the required universal cone. First we show that it is a cone, i.e. that for every $f:X\to B$ as above and every map of finite sets $\phi: B \to B'$, the following equation holds:
\begin{equation}\label{eq:psi}
\phi \circ \psi_f = \psi_{\phi \circ f}
\end{equation}
For every $x \in X$ and $b \in B$, if $f(x)=b$ then $(\phi \circ f)(x)=\phi(b)$. So, for any $b$, $f^{-1}(b)\subseteq (\phi \circ f)^{-1}(\phi(b))$. We infer that if $f^{-1}(b)$ is in an ultrafilter $U$, also $(\phi \circ f)^{-1}(\phi(b))$ is in $U$; in other words, if $\psi_f(U)=b$ then $\psi_{\phi \circ f}(U)=\phi(b)$. Hence the equation.

Universality means that for an arbitrary cone $(M, \{\psi'_f\}_{f:X \to B})$ there exists a unique $\alpha : M \to SC(X)$ such that for every $f$, we have that:
$$\psi_f \circ \alpha = \psi'_f$$
We shall make use of {\it characteristic functions}. We denote by $\Omega$ the set $\{0,1\}$. If $X$ is a topological space and $A \in C(X)$, the characteristic function of $A$, $\chi_A : X \to \Omega$, is continuous.

The existence of $\alpha$ will be proven first. We define it as:
$$\alpha(m)=\{A \in C(X) \mid \psi'_{\chi_A}(m)=1 \}$$
for every $m \in M$. There must be shown: i) that it is well-defined, i.e. that $\alpha(m)$ is an ultrafilter for every $m$; ii) that it is continuous; and iii) that it commutes with the projections.

To prove (i), the following equation is useful:
\begin{equation}\label{eq:chi}
\chi_{f^{-1}(b)}=\chi_{\{b\}} \circ f,
\end{equation}
as it trivially holds for any $f$, $b$ as above. Now we can derive the following equivalences:
\begin{align*}
f^{-1}(b) \in \alpha(m) &\ifff \psi'_{\chi_{f^{-1}(b)}}(m)=1 \\
&\ifff \psi'_{\chi_{\{b\}} \circ f}(m)=1&(\text{by}~\ref{eq:chi}) \\
&\ifff \chi_{\{b\}}(\psi'_f(m)) = 1&(\text{by the analogue of}~\ref{eq:psi}) \\
&\ifff \psi'_f(m) = b.
\end{align*}

Let $\Omega'$ be the set $\{0,1,2\}$. To show that $\alpha(m)$ is an ultrafilter in $C(X)$, we firstly write $X$ as a partition of three (disjoint) clopen sets $X_0, X_1, X_2$. Let $f:X \to \Omega'$ be such that $f(x)=i$ if and only if $a \in X_i$. By the earlier equivalences, we have that:
\begin{align*}
X_i \in \alpha(m) &\ifff f^{-1}(i) \in \alpha(m) \\
&\ifff \psi'_f(m) = i,
\end{align*}
so there is exactly one $i$, namely $\psi'_f(m)$, such that $X_i \in \alpha(m)$. By Lemma~\ref{lgh}, $\alpha(m)$ is an ultrafilter for every $m$, i.e. (i).

Because of that, we can apply $\psi_f$ to $\alpha(m)$, and by using the definition of $\psi_f$ on the tautology $\psi_f(\alpha(m))=\psi_f(\alpha(m))$, we obtain $f^{-1}(\psi_f(\alpha(m))) \in \alpha(m)$, and, therefore, by using again the earlier equivalences, we have that $\psi'_f(m)=\psi_f(\alpha(m))$. This takes care of (iii).

What remains is continuity, i.e. (ii). It is sufficient to show that the inverse image of a basic clopen set is open. Let $A \in C(X)$. We have:
\begin{align*}
\alpha^{-1}(D_A)&=\{m \in M \mid \alpha(m) \in D_A\}\\
&=\{m \in M \mid A \in \alpha(m) \}\\
&=\{m \in M \mid \psi'_{\chi_A}(m) = 1 \} \\
&=(\psi'_{\chi_A})^{-1}(1),
\end{align*}
which is clearly open, since the $\psi'_f$'s are continuous. Therefore, our $\alpha$ is valid and we have shown existence.

The uniqueness part is easy. If a valid $\beta$ commutes with the projections, then:
\begin{align*}
A \in \beta(m) &\ifff \chi_A^{-1}(1) \in \beta(m) \\
&\ifff \psi_{\chi_A}(\beta(m))=1 \\
&\ifff \psi'_{\chi_A}(m)=1 \\
&\ifff A \in \alpha(m),
\end{align*}
so, by extensionality, $\beta = \alpha$.

To show that the codensity monad acts the same as $SC$ on morphisms, we must prove that for every $f: X \to Y \in \Top$ and every $g: Y \to B$ with $B \in \FinSet$, we have that:
$$\psi_g \circ SC(f) = \psi_{g\circ f}$$
But, if $b \in B$ and $U \in SC(X)$:
\begin{align*}
\psi_g(SC(f)(U))=b &\ifff \{y \in Y \mid g(y)=b \} \in SC(f)(U) \\
&\ifff \{x \in X \mid f(x) \in \{y \in Y \mid g(y)=b\} \} \in U \\
&\ifff \{x \in X \mid g(f(x))=b \} \in U \\
&\ifff \psi_{g\circ f}(U)=b.
\end{align*}

Now we turn to the other monadic components. For the unit, as stated in the introduction, we must prove the validity of the equation:
$$\psi_f \circ \eta_X = f,$$
for every map $f$ from $X$ into a finite set $B$. But since $x \in f^{-1}(f(x))$, for every $x \in X$, by the definition of $\eta_X$ we have that:
$$f^{-1}(f(x)) \in \eta_X(x)$$
and so, using the property of $\psi_f$:
$$\psi_f(\eta_X(x)) = f(x).$$

For the multiplication, the corresponding equation is:
$$\psi_f \circ \mu_X = \psi_{\psi_f}$$
But that is equivalent, according to how we defined $\mu_X$, to:
$$\psi_f = \psi_{\psi_f} \circ \eta_{SC(X)},$$
which is a special case of the unit equation and so we have also taken care of the $\mu$ transformation.

The monad laws naturally hold, as it has been pointed out in the Introduction. Hence, the theorem has been proven.
\end{proof}

\begin{cor}
The category of algebras for this codensity monad is equivalent to the category of Stone spaces.
\end{cor}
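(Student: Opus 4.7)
The plan is to invoke the general idempotent-monad fact recalled in the Introduction. First, I would verify that the codensity monad computed in the theorem is idempotent: by construction $\mu_X = (\eta_{SC(X)})^{-1}$ for every $X$, so the multiplication $\mu$ is a natural isomorphism. Hence the Introduction's discussion applies and tells us that the category of algebras $\Top^{SC}$ is equivalent, via the forgetful functor, to the essential image of the endofunctor $SC : \Top \to \Top$ (viewed as a full subcategory of $\Top$).

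Next, I would identify this essential image with the full subcategory of Stone spaces. In one direction, for any $X \in \Top$, the object $SC(X)$ is the space of ultrafilters of the Boolean algebra $C(X)$, which was noted in the preamble of this section to be a Stone space; so the essential image of $SC$ is contained in the subcategory of Stone spaces. In the other direction, the standard Stone-duality fact recalled earlier says that $\eta_Y : Y \to SC(Y)$ is an isomorphism whenever $Y$ is already a Stone space, so every Stone space is isomorphic to an object of the form $SC(Y)$ and therefore lies in the essential image.

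Since Stone spaces form a full subcategory of $\Top$, combining the two containments shows that the essential image of $SC$ coincides with the category of Stone spaces. Composing this with the equivalence from the idempotent-monad step yields the desired equivalence between $\Top^{SC}$ and the category of Stone spaces. There is no real obstacle: once idempotence is observed, the entire argument is forced by facts that are either trivial or already quoted from Stone duality, and no further computation with the limit defining $SC$ is required.
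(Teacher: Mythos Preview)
Your proposal is correct and follows essentially the same approach as the paper: observe that $\mu_X=(\eta_{SC(X)})^{-1}$ makes the monad idempotent, invoke the Introduction's argument to identify the category of algebras with the essential image of $SC$, and then identify that essential image with the Stone spaces via the Stone-duality facts already recalled. The only difference is cosmetic: you spell out both containments for the essential image explicitly, whereas the paper simply asserts it as a consequence of the theorem.
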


\begin{proof}
By the theorem, we have that the essential image of this codensity monad is the category of Stone spaces. Since $\mu_X$ was defined as $(\eta_{SC(X)})^{-1}$, we have that $\mu$ is a natural isomorphism and therefore the monad is idempotent. By the argument in the Introduction, its essential image is equivalent to its category of algebras.
\end{proof}

\section{Interlude}\label{sec:inter}

The results just obtained can also be given the following interpretation: maps into finite sets are in a close correspondence with the Boolean algebra of clopen sets of a space, and Stone spaces are exactly the spaces that do not “contain” more information than that already contained in their algebras of clopen sets.

We can ask the natural follow-up question: what spaces are the ones determined by their lattices of open sets? This, and how we can formulate it by using codensity, is the problem that we tackle in the sequel. To begin with, let us remark that the lattice of open sets of a space is of the following special kind.

\begin{defi}
A {\bf frame} is a poset such that finite infima and arbitrary suprema exist and which satisfying the corresponding distributive law:
$$x \wedge (\bigvee_{i \in I} y_i) = \bigvee_{i \in I} (x\wedge y_i)$$
for any $x$ and $\{y_i\}_{i\in I}$ taken from the poset.
\end{defi}

We can organize frames as a category $\catname{Frm}$, taking a frame homomorphism to be a monotone function that preserves these finite infima and arbitrary suprema. We denote by $\Oo$ the (contravariant) functor that maps a topological space to its frame of opens and acts on morphisms by pre-image. (The essential image of $\Oo$ is the category of {\bf topological frames}.)

There is also an analogue of ultrafilters for frames.

\begin{defi}
A {\bf point}, or a {\bf completely prime filter} of a frame $F$ is a subset $\fp$ of $F$ such that:
\begin{enumerate}[(i)]
\item the top element is in $\fp$;
\item the bottom element is {\bf not} in $\fp$;
\item if $A_1 \in \fp$ and $A_2 \in \fp$, then $A_1 \wedge A_2 \in \fp$;
\item if $A_1 \in \fp$ and $A_2 \in F$, then $A_1 \vee A_2 \in \fp$;
\item if $\bigvee\limits_{i \in I} A_i \in \fp$, then there is some $k \in I$ such that $A_k \in \fp$.
\end{enumerate}
\end{defi}

A topology on the set of points of a frame can be given by taking the open sets to be the sets of the form $A^*=\{\fp \mid A \in \fp\}$. This gives another contravariant functor, $\Fp$, from frames to topological spaces (which also acts on morphisms by pre-image). The resulting endofunctor of $\Top$, $\Fp\Oo$, acts on morphisms $f: X \to Y$ by (for an arbitrary $\fp \in \Fp\Oo(X)$):
$$\Fp\Oo(f)(\fp)=\{A \in \Oo(Y) \mid \{x \in X \mid f(x) \in A \} \in \fp \}$$
and is also endowed with a natural “unit” transformation $\eta$, such that for every space $X$, $\eta_X: X \to \Fp\Oo(X)$ is given by:
$$\eta_X(x)=\{A\in\Oo(X)\mid x\in A\}$$
It is easy to check that this defines a well-defined continuous map of topological spaces. We can then simply define {\bf sober spaces} as those spaces $X$ for which this $\eta_X$ is an isomorphism -- in other words, spaces completely “determined” by their open sets. In addition, for every space $X$, $\Fp\Oo(X)$ is sober and is called the {\bf soberification} of $X$. (A standard reference for these, and more, facts on sober spaces is Johnstone \cite{johnst}.) We note that we shall frequently recycle notations introduced in Section~\ref{sec:stone}, like in the case of this $\eta$, if their purpose is similar enough.

What is left to do in order to state our result is to find a subcategory of $\Top$ which plays a role towards sober spaces like $\FinSet$ does towards Stone spaces. From the way Lemma~\ref{lgh} was used in Section~\ref{sec:stone}, we can derive that the use of $\FinSet$ was directly tied to the fact that ultrafilters are characterized by their behavior on finite families of disjoint elements of their Boolean algebra, which are finite families of disjoint opens in the corresponding Stone space (which are necessarily clopens by finiteness and disjointness). Condition (v) in the definition of a completely prime filter suggests that the analogue in this case will be families of not necessarily disjoint opens, indexed by arbitrary sets $I$. In other words, for every set $I$, a space must be found that “represents” $I$-indexed families of opens. The category of these spaces will be the analogue of $\FinSet$.

Take a set $I$ and topologize the set $P(I)$ (the powerset of $I$) as follows: for every finite $F\subseteq I$, define $U_F$ as the set $\{A \subseteq I \mid F \subseteq A\}$. As $U_F \cap U_G = U_{F \cup G}$ and $U_\emptyset=P(I)$, these sets form a basis for a topology on $P(I)$. If we fix a space $X$, then for every $I$-indexed family of opens in X, $\{A_i\}_{i\in I}$, we can define a continuous map $f:X \to P(I)$, where $P(I)$ is considered together with the topology that we just defined, by putting $f(x)=\{i \in I \mid x \in A_i\}$. It can be easily seen that this correspondence between $I$-indexed families of open sets of $X$ and continuous maps to $P(I)$ is well-defined and bijective, hence fulfilling the informal objective mentioned above. (We omit the proof of this bijection, as we shall not use it explicitly.)

A more conceptual way of presenting the model spaces in the last paragraph is as follows. When $I$ is a singleton, let's say $I=\{t\}$, then $P(I)=\{\emptyset,\{t\}\}$, with the above topology has $\{\{t\}\}$ as its single non-trivial open set. This space is called the {\bf Sierpi\'nski space} and we denote it by $\mathbb{S}$. Given that maps from an arbitrary space $X$ to this spaces parametrize, by the discussion above, single open sets in $X$, we have that maps to powers $\mathbb{S}^I$ of the Sierpi\'nski space will parametrize, by the universal property of the product, exactly $I$-indexed families of open sets of $X$. These powers $\mathbb{S}^I$ are therefore, by the Yoneda lemma, homeomorphic to the $P(I)$'s constructed in the last paragraph.

\section{The case of sober spaces}\label{sec:sober}

From now on, we denote by $\mathcal{N}$ the full subcategory of $\Top$ containing exactly the $P(I)$ spaces constructed above, i.e. the arbitrary powers of the  Sierpi\'nski space (which are incidentally sober, but this fact is not needed for the proof below). We can now state the precise analogue of the result for Stone spaces in this new framework:

\begin{teo}
The codensity monad of the inclusion functor $\mathcal{N} \hookrightarrow \Top$ is $(\Fp\Oo,\eta,\mu)$, where, for every $X$, $\mu_X$ is defined as $(\eta_{\Fp\Oo(X)})^{-1}$.
\end{teo}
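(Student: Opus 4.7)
The plan is to mirror the argument for Stone spaces step by step, substituting completely prime filters for ultrafilters and the continuous lattice operations on powers of $\mathbb{S}$ for the three-element partition trick used in the finite case. First I would define the projections: for $f: X \to \mathbb{S}^I$, which by the discussion in Section~\ref{sec:inter} corresponds to the family of opens $A_i = f^{-1}(U_{\{i\}})$, set
$$\psi_f(\fp) = \{i \in I \mid A_i \in \fp\} \in \mathbb{S}^I.$$
Continuity is verified on the basis of $\mathbb{S}^I$: $\psi_f^{-1}(U_F) = \{\fp \mid \forall i \in F,\ A_i \in \fp\} = \big(\bigcap_{i \in F} A_i\big)^*$, which is open in $\Fp\Oo(X)$ using the finite-meet property (iii) of completely prime filters. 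The cone condition $\phi \circ \psi_f = \psi_{\phi \circ f}$ follows because $\pi_i \circ (\phi \circ f) = \chi_{(\phi \circ f)^{-1}(U_{\{i\}})}$.

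The heart of the proof, and the main obstacle, is the universality step. Given a cone $(M, \{\psi'_f\})$, I would mimic the Stone case by using characteristic maps $\chi_A : X \to \mathbb{S}$ (continuous because $\chi_A^{-1}(\{1\}) = A$ is open), defining
$$\alpha(m) = \{A \in \Oo(X) \mid \psi'_{\chi_A}(m) = 1\}.$$
The key observation is that, via the cone condition with the coordinate projections $\pi_i : \mathbb{S}^I \to \mathbb{S}$ (which send $f$ to $\chi_{A_i}$), one automatically gets $\psi'_f(m) = \{i \mid A_i \in \alpha(m)\}$ for every $f: X \to \mathbb{S}^I$. This identification reduces each axiom for a completely prime filter to a continuity/cone argument with a suitable map $\mathbb{S}^I \to \mathbb{S}$. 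Explicitly, for (i) and (ii) I factor the constant maps $\chi_X, \chi_\emptyset$ through $\mathbb{S}^\emptyset$ (the terminal space, which is in $\mathcal{N}$) composed with the two inclusions $\mathbb{S}^\emptyset \to \mathbb{S}$; for (iii) and (iv) I use the continuous binary meet and join $\mathbb{S}^2 \to \mathbb{S}$ (continuity is an easy check on basic opens); and for the crucial complete-primeness condition (v), I use the map $g: \mathbb{S}^I \to \mathbb{S}$ defined by $g(J) = 1 \iff J \neq \emptyset$, which is continuous because $g^{-1}(\{1\}) = \bigcup_{i \in I} U_{\{i\}}$, and observe that $g \circ f = \chi_{\bigcup_i A_i}$, so $\bigvee_i A_i \in \alpha(m)$ forces $\psi'_f(m) \neq \emptyset$, hence some $A_k \in \alpha(m)$.

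Once $\alpha(m)$ is known to be a completely prime filter, continuity of $\alpha$ is immediate, as $\alpha^{-1}(A^*) = (\psi'_{\chi_A})^{-1}(\{1\})$. Commutation with the projections, $\psi_f \circ \alpha = \psi'_f$, follows from the identification $\psi'_f(m) = \{i \mid A_i \in \alpha(m)\}$ derived above. Uniqueness of $\alpha$ repeats the Stone-case argument verbatim: any valid $\beta$ must satisfy $A \in \beta(m) \ifff \psi_{\chi_A}(\beta(m)) = 1 \ifff \psi'_{\chi_A}(m) = 1$.

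The remaining verifications -- that the codensity monad acts on morphisms as $\Fp\Oo$, and that the unit and multiplication agree with the specified $\eta$ and $\mu_X = (\eta_{\Fp\Oo(X)})^{-1}$ -- are entirely formal and follow the Stone template: the action on morphisms amounts to $\psi_g \circ \Fp\Oo(f)(\fp) = \{i \mid g^{-1}(U_{\{i\}}) \in \Fp\Oo(f)(\fp)\} = \{i \mid (g \circ f)^{-1}(U_{\{i\}}) \in \fp\} = \psi_{g \circ f}(\fp)$; the unit equation $\psi_f \circ \eta_X = f$ holds because $x \in f^{-1}(U_{\{i\}})$ iff $i \in f(x)$; and the multiplication equation reduces, by the definition $\mu_X = (\eta_{\Fp\Oo(X)})^{-1}$, to a special case of the unit equation exactly as in the Stone proof. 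The monad laws then hold automatically, as noted in the Introduction.
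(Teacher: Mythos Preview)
Your proposal is correct and follows essentially the same route as the paper: the same projections $\psi_f$, the same definition of $\alpha$ via characteristic maps into $\mathbb{S}$, the same auxiliary continuous maps $\mathbb{S}^I\to\mathbb{S}$ (binary meet, binary join, the ``nonempty'' map) to verify the completely-prime-filter axioms, and the same formal reductions for the unit and multiplication.

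The one place where your outline is thinner than the paper is the cone condition $\phi\circ\psi_f=\psi_{\phi\circ f}$ for an \emph{arbitrary} continuous $\phi:\mathbb{S}^I\to\mathbb{S}^J$. The identity $\pi_j\circ(\phi\circ f)=\chi_{(\phi\circ f)^{-1}(U_{\{j\}})}$ that you cite reduces the question to showing $\psi_f(\fp)\in\phi^{-1}(U_{\{j\}})\ifff f^{-1}(\phi^{-1}(U_{\{j\}}))\in\fp$, and since $\phi^{-1}(U_{\{j\}})$ is a general open of $\mathbb{S}^I$, this needs the intermediate lemma
\[
\psi_f(\fp)\in D \ifff f^{-1}(D)\in\fp\quad\text{for every open }D\subseteq\mathbb{S}^I,
\]
which the paper proves explicitly by writing $D=\bigcup_a U_{F_a}$ and invoking axioms (iii)--(v) of a completely prime filter. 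That step is not a formality---it is exactly where complete primeness enters on the $\Fp\Oo(X)$ side---so it should be spelled out rather than absorbed into the phrase ``follows because''.
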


\begin{proof}
As before, we first need to show:
$$\Fp\Oo(X)=\varprojlim_{I \in \Set,\ f:X\to P(I)} P(I)$$
i.e. to give that space the structure of a universal cone (where $P(I)$ is considered as above, being an object of $\mathcal{N}$). For every $f:X \to P(I)$, we define the map $\psi_f: \Fp\Oo(X) \to P(I)$ by the (extensional) condition:
$$i \in \psi_f(\fp) \ifff f^{-1}(U_{\{i\}})\in\fp$$
for every $i \in I$ and $\fp \in \Fp\Oo(X)$. We check the continuity on the opens which belong to the basis ($F$ is an arbitrary finite subset of $I$):
\begin{align*}
\psi_f^{-1}(U_F)&=\{\fp \mid \psi_f(\fp) \in U_F \} \\
&=\{\fp \mid F \subseteq \psi_f(\fp) \} \\
&=\{\fp \mid \forall i \in F, i \in \psi_f(\fp) \} \\
&=\bigcap_{i \in F}\{\fp \mid f^{-1}(U_{\{i\}})\in\fp \} \\
&=\bigcap_{i \in F} (f^{-1}(U_{\{i\}}))^*,
\end{align*}
which is open. We claim that $(\Fp\Oo(X), \{\psi_f\}_{f:X \to P(I)})$ is a universal cone over that diagram. Let us first check the cone condition, i.e. that for every $f: X \to P(I)$ and every continuous $\phi : P(I) \to P(J)$ we have:
\begin{equation}\label{eq:psi2}
\phi \circ \psi_f = \psi_{\phi \circ f}
\end{equation}
To show that, we first prove the claim that for every $\fp \in \Fp\Oo(X)$ and every open set $D$ of $P(I)$:
\begin{equation}\label{eq:D}
\psi_f(\fp) \in D \ifff f^{-1}(D) \in \fp
\end{equation}
Since the $U_F$'s form a basis, we can write $D$ as $\bigcup_{a \in K} U_{F_a}$ where $K$ is a possibly infinite set and for each $a \in K$, $F_a$ is a finite subset of $I$. Therefore:
\begin{align*}
\psi_f(\fp) \in D &\ifff \exists a \in K, \psi_f(\fp) \in U_{F_a} \\
&\ifff \exists a \in K, F_a \subseteq \psi_f(\fp) \\
&\ifff \exists a \in K, \forall i \in F_a, i \in \psi_f(\fp) \\
&\ifff \exists a \in K, \forall i \in F_a, f^{-1}(U_{\{i\}})\in\fp \\
&\ifff \exists a \in K, \bigcap_{i \in F_a} f^{-1}(U_{\{i\}})\in\fp \\
&\ifff \bigcup_{a \in K}\bigcap_{i \in F_a} f^{-1}(U_{\{i\}})\in\fp \\
&\ifff f^{-1}(\bigcup_{a \in K}\bigcap_{i \in F_a}U_{\{i\}})\in\fp \\
&\ifff f^{-1}(D) \in \fp.
\end{align*}
Now, for every $\fp \in \Fp\Oo(X)$ and every $i \in I$:
\begin{align*}
i \in  \psi_{\phi \circ f}(\fp) &\ifff (\phi \circ f)^{-1}(U_{\{i\}}) \in \fp \\
&\ifff \{x \in X \mid \phi(f(x)) \in U_{\{i\}}\} \in \fp \\
&\ifff \{x \in X \mid f(x) \in \phi^{-1}(U_{\{i\}})\} \in \fp \\
&\ifff \psi_f(\fp) \in \phi^{-1}(U_{\{i\}})&(\text{by}~\ref{eq:D})  \\
&\ifff \phi(\psi_f(\fp)) \in U_{\{i\}} \\
&\ifff i \in (\phi \circ \psi_f)(\fp),
\end{align*}
and so the cone condition is valid.

For showing universality, let $(M, \{\psi'_f\}_{f:X \to P(I)})$ be an arbitrary cone. We must show the existence of a unique $\alpha: M \to \Fp\Oo(X)$ such that for every $f$, we have that:
$$\psi_f \circ \alpha = \psi'_f$$
Fix an arbitrary three-element set $T$ and denote its distinct elements by $t$, $z_1$, $z_2$. We first define a new flavour of characteristic functions, by using the space $P(\{t\}) \in \mathcal{N}$. More precisely, if $X$ is a space and $A \in \Oo(X)$, the function $\chi_A: X \to P(\{t\})$, defined by the rule:
$$\chi_A(x)=\{t\} \ifff x \in A$$
is continuous. We can now define the $\alpha$ on every $m \in M$ by:
$$A \in \alpha(m) \ifff \psi'_{\chi_A}(m)=\{t\},$$
for every $A \in \Oo(X)$.

Let us prove that this map is well defined, i.e. that $\alpha(m)$ is a completely prime filter in $\Oo(X)$ for every $m$. We begin by showing condition (iii) in the definition. Let $A_1, A_2 \in \alpha(m)$. We define the auxiliary maps $\chi: X \to P(\{z_1,z_2\})$, $\chi_1, \chi_2:P(\{z_1,z_2\})\to P(\{t\})$ by:
$$z_i \in \chi(x) \ifff x \in A_i$$
and
$$\chi_i(A)=\{t\} \ifff z_i \in A$$
for every $i \in \{1,2\}, x\in X, A \subseteq \{z_1,z_2\}$. In addition, we define $\chi_3: P(\{z_1,z_2\})\to P(\{t\})$ by:
$$\chi_3(A)=\{t\} \ifff A=\{z_1,z_2\}.$$
These maps are all continuous and satisfy:
$$\chi_i \circ \chi = \chi_{A_i}$$
for every $i \in \{1,2\}$ and
$$\chi_3 \circ \chi = \chi_{A_1 \cap A_2}.$$
By the cone condition for $M$, the following equations also hold:
$$\chi_i \circ \psi'_\chi = \psi'_{\chi_{A_i}}$$
for every $i \in \{1,2\}$ and
$$\chi_3 \circ \psi'_\chi = \psi'_{\chi_{A_1 \cap A_2}}.$$
Since $A_i \in \alpha(m)$, $\psi'_{\chi_{A_i}}(m)=\{t\}$, so $\chi_i(\psi'_\chi(m))=\{t\}$. By the equations above, $z_1, z_2 \in \psi'_\chi(m)$, so $\psi'_\chi(m)=\{z_1,z_2\}$ and $\chi_3(\psi'_\chi(m))=\{t\}$, that is, $\psi'_{\chi_{A_1 \cap A_2}}(m)=\{t\}$. We have, therefore, that $A_1 \cap A_2 \in \alpha(m)$.

For showing condition (iv), we take suitable $A_1$ and $A_2$ and maintain the maps $\chi$ and $\chi_1$ defined above. We define the continuous mapping $\chi_4: P(\{z_1,z_2\})\to P(\{t\})$ by:
$$\chi_4(A)=\{t\} \ifff A\neq\emptyset,$$
so we have that
$$\chi_4 \circ \chi = \chi_{A_1 \cup A_2}$$
and
$$\chi_4 \circ \psi'_\chi = \psi'_{\chi_{A_1 \cup A_2}}.$$
Since $A_1 \in \alpha(m)$, $\psi'_{\chi_{A_1}}(m)=\{t\}$, so $\chi_1(\psi'_\chi(m))=\{t\}$. Then $z_1 \in \psi'_\chi(m)$, so $\psi'_\chi(m)\neq\emptyset$ and $\chi_4(\psi'_\chi(m))=\{t\}$. Thus $\psi'_{\chi_{A_1 \cup A_2}}(m)=\{t\}$ and $A_1 \cup A_2 \in \alpha(m)$.

For showing condition (v), let $\{A_i\}_{i\in I}$ be a family of open sets of $X$ such that $\bigcup_{i \in I} A_i \in \alpha(m)$. We define a continuous map $\rho: X \to P(I)$ by
$$i \in \rho(x) \ifff x \in A_i,$$
a continuous map $\rho': P(I) \to P(\{t\})$ by
$$\rho'(A)=\{t\} \ifff A\neq\emptyset$$
and for each $i \in I$ a continuous map $\rho_i:P(I) \to P(\{t\})$ by
$$\rho_i(A)=\{t\} \ifff i \in A.$$
These maps satisfy (for each $i \in I$):
$$\rho_i \circ \rho = \chi_{A_i}$$
and
$$\rho' \circ \rho = \chi_{\bigcup_{i \in I} A_i}.$$
By using the cone conditions as before, we obtain that $\rho'(\psi'_\rho(m))=\{t\}$, hence $\psi'_\rho(m)$ is nonempty. Then there exists $i \in I$ such that $i \in \psi'_\rho(m)$ and by similar manipulations we obtain that $A_i \in \alpha(m)$.

Condition (ii) can easily be seen as the special case $I = \emptyset$ of condition (v). For condition (i), denote by $\tau : X \to P(\emptyset)$ the unique such continuous mapping ($P(\emptyset)$ being a terminal object in $\Top$) and by $\tau' : P(\emptyset) \to P(\{t\})$ the unique continuous mapping such that $\tau'(\emptyset)=\{t\}$. Then $\tau' \circ \tau = \chi_X$, so $\tau' \circ \psi'_\tau = \psi'_{\chi_X}$. Since only $\{t\}$ is in the image of $\tau'$, we get that $\psi'_{\chi_X}(m)=\{t\}$ and therefore that $X \in \alpha(m)$.

We have now shown that $\alpha(m)$ is a completely prime filter, so $\alpha$ is well defined.

To show that $\alpha$ is continuous, we take an open set $A^*$ of $\Fp\Oo(X)$. Then:
\begin{align*}
\alpha^{-1}(A^*)&=\{m \in M \mid \alpha(m) \in A^* \} \\
&=\{m \in M \mid A \in \alpha(m)\} \\
&=\{m \in M \mid \psi'_{\chi_A}(m)=\{t\}\} \\
&=(\psi'_{\chi_A})^{-1}(\{t\}),
\end{align*}
which is open, since the $\psi'_f$'s are continuous.

To finish up showing the existence of $\alpha$, we must show that the $\alpha$ we constructed commutes with the projections, i.e. that for every valid $f: X \to P(I)$ and every $m \in M$ we have that:
$$\psi_f(\alpha(m))=\psi'_f(m).$$
We first prove that for any open $D$ of $P(I)$:
\begin{equation}\label{eq:D2}
\chi_{f^{-1}(D)} = \chi_D \circ f
\end{equation}
Let $x \in X$ arbitrary. Then:
\begin{align*}
\chi_{f^{-1}(D)}(x)=\{t\}&\ifff x \in f^{-1}(D) \\
&\ifff f(x) \in D\\
&\ifff \chi_D(f(x))= \{t\}\\
&\ifff (\chi_D \circ f)(x)=\{t\}.
\end{align*}
Let now $i \in I$ be arbitrary. Then:
\begin{align*}
i \in \psi_f(\alpha(m))&\ifff f^{-1}(U_{\{i\}}) \in \alpha(m) \\
&\ifff \psi'_{\chi_{f^{-1}(U_{\{i\}})}}(m)=\{t\} \\
&\ifff \psi'_{\chi_{U_{\{i\}}} \circ f}(m)=\{t\}&(\text{by}~\ref{eq:D2}) \\
&\ifff (\chi_{U_{\{i\}}} \circ \psi'_{f})(m)=\{t\} \\
&\ifff \chi_{U_{\{i\}}}(\psi'_{f}(m))=\{t\} \\
&\ifff \psi'_{f}(m) \in U_{\{i\}}\\
&\ifff i \in \psi'_{f}(m).
\end{align*}

To show uniqueness, let $\beta : M \to \Fp\Oo(X)$ be a continuous map that commutes with the projections -- in particular, for every $m \in M$ and every open set $A$ of $X$ we have that:
$$\psi_{\chi_A}(\beta(m))=\psi'_{\chi_A}(m)$$
Then we can deduce:
\begin{align*}
A \in \alpha(m) &\ifff \psi'_{\chi_A}(m)=\{t\} \\
&\ifff \psi_{\chi_A}(\beta(m))=\{t\} \\
&\ifff t \in \psi_{\chi_A}(\beta(m)) \\
&\ifff \chi_A^{-1}(U_{\{t\}}) \in \beta(m) \\
&\ifff \{x \in X \mid \chi_A(x)=\{t\}\} \in \beta(m) \\
&\ifff A \in \beta(m).
\end{align*}

Now we will show that $\Fp\Oo$ acts on morphisms in the same way that a codensity monad would, i.e. that for every $f: X \to Y \in \Top$ and every $g: Y \to P(I)$ we have that:
$$\psi_g \circ \Fp\Oo(f) = \psi_{g\circ f}.$$
For every $i \in I$ and $\fp \in \Fp\Oo(X)$, we can deduce:
\begin{align*}
i \in \psi_g(\Fp\Oo(f)(\fp)) &\ifff g^{-1}(U_{\{i\}}) \in \Fp\Oo(f)(\fp) \\
&\ifff \{x \in X \mid f(x) \in g^{-1}(U_{\{i\}})\} \in \fp \\
&\ifff \{x \in X \mid g(f(x)) \in U_{\{i\}} \} \in \fp \\
&\ifff (g \circ f)^{-1}(U_{\{i\}}) \in \fp \\
&\ifff i \in \psi_{g \circ f}(p).
\end{align*}

To show that the $\eta$ from earlier really is a unit, i.e. that for any $g: X \to P(I)$ we have $\psi_g \circ \eta_X = g$, we do the following deductions for any $x \in X$ and $i \in I$:
\begin{align*}
i \in \psi_g(\eta_X(x)) &\ifff g^{-1}(U_{\{i\}}) \in \eta_X(x) \\
&\ifff x \in g^{-1}(U_{\{i\}}) \\
&\ifff g(x) \in U_{\{i\}} \\
&\ifff i \in g(x).
\end{align*}

To show that the $\mu$ defined earlier is the multiplication component, considering its form, we must really show that for any $g: X \to P(I)$ we have that $\psi_g=\psi_{\psi_g} \circ \eta_{\Fp\Oo(X)}$. But that is a special case of the unit equation.

We are now finished.
\end{proof}

\begin{cor}
The category of algebras for this codensity monad is equivalent to the category of sober spaces.
\end{cor}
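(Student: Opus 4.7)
The plan is to mirror exactly the corresponding corollary for Stone spaces, reducing the statement to two essentially independent ingredients and then combining them. First, I would observe that the theorem just established identifies the codensity monad as $(\Fp\Oo, \eta, \mu)$ with $\mu_X = (\eta_{\Fp\Oo(X)})^{-1}$; because $\mu$ is thus defined pointwise as an inverse of a component of a natural transformation, it is visibly a natural isomorphism, so the monad is idempotent.

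Next, I would invoke the general argument already spelled out in the Introduction: for any idempotent monad on a category $\mathcal{C}$, the forgetful functor from the category of algebras to $\mathcal{C}$ is fully faithful, and its essential image coincides with the essential image of the underlying endofunctor; hence the category of algebras is equivalent to that essential image. It then remains to identify the essential image of $\Fp\Oo : \Top \to \Top$ with the full subcategory of sober spaces.

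For this identification I would appeal to the standard facts about sober spaces recalled in Section~\ref{sec:inter}. On the one hand, for every space $X$ the soberification $\Fp\Oo(X)$ is sober, which shows that the essential image of $\Fp\Oo$ is contained in the subcategory of sober spaces. Conversely, a space $X$ is sober exactly when $\eta_X : X \to \Fp\Oo(X)$ is an isomorphism, which exhibits every sober space as being isomorphic to an object in the image of $\Fp\Oo$ and hence in its essential image. The hard part, if there is one, is nothing more than bookkeeping: all of the substantive content has already been absorbed into the preceding theorem and into the general idempotent-monad argument of the Introduction, so no new topological or categorical difficulty arises here.
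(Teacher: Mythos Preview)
Your proposal is correct and follows essentially the same route as the paper: you note that $\mu_X = (\eta_{\Fp\Oo(X)})^{-1}$ forces the monad to be idempotent, invoke the Introduction's argument equating the category of algebras with the essential image of the endofunctor, and then identify that essential image with the sober spaces using the standard facts from Section~\ref{sec:inter}. If anything, you are slightly more explicit than the paper in spelling out both inclusions for the essential-image identification.
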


\begin{proof}
By the theorem, we have that the essential image of this codensity monad is the category of Stone spaces. Since $\mu_X$ was defined as $(\eta_{\Fp\Oo(X)})^{-1}$, we have that $\mu$ is a natural isomorphism and therefore the monad is idempotent. By the argument in the Introduction, its essential image is equivalent to its category of algebras.
\end{proof}

\section{An alternate proof}\label{sec:alter}

We finally present the announced easier argument one can use to obtain the results above, as suggested by an anonymous reviewer. We can use the fact that that if $h : \mathcal{B} \to \mathcal{A}$ is a codense functor, and $i : \mathcal{A}\to\mathcal{A}_0$ is a reflective subcategory, then the codensity monad for $i \circ h: \mathcal{B}\to\mathcal{A}_0$ is the idempotent monad given by the reflection, or equivalently, the essential image of the codensity monad for $i \circ h$ is the subcategory $\mathcal{A}$. This happens because (taking $F$ to be the left adjoint of $i$) for any object $A$ of $\mathcal{A}_0$:
\begin{align*}
T^{i \circ h}(A) &= \varprojlim_{B \in \mathcal{B},\ f:A\to i(h(B))} i(h(B)) \\
&\simeq i\left(\varprojlim_{B \in \mathcal{B},\ f:A\to i(h(B))} h(B)\right) &\text{(RAPL)}\\
&\simeq i\left(\varprojlim_{B \in \mathcal{B},\ g:F(A)\to h(B)} h(B)\right) &\text{($F$ is left adjoint to $i$)}\\
&\simeq i(T^h(F(A))) &\text{(definition of $T^h$)}\\
&\simeq i(F(A)) &\text{($h$ is codense)}\\
\end{align*}
Here we take $\mathcal{A}_0$ to be $\Top$ and $\mathcal{A}$ to be successively the category of Stone spaces and that of sober spaces. As we mentioned before, those two categories are indeed reflective subcategories of $\Top$. It remains to be proven that the two functors which take finite sets into Stone spaces and $\mathcal{N}$ into sober spaces are codense. In the second case, restricting the well-known duality of sober spaces to topological frames (see \cite[p. 44, Corollary, (1)]{johnst}), the powers of the Sierpi\'nski space correspond to free frames (see \cite[Scholium C1.1.4]{eleph}). Similarly, in the first case, we can see that the finite sets correspond by duality to finite Boolean algebras. So it suffices to prove that the inclusions of finite Boolean algebras into $\BA$ and that of the free frames into topological frames are dense (a notion dual to codensity). For the first one, we use the fact that $\BA$ is a finitarily algebraic category over $\Set$, so the finitely generated free algebras -- and therefore the finite algebras which contain them -- are dense in $\BA$ (a reference for this fact is \cite[Theorem 2.2]{isbell}). Similarly, by \cite[Theorem II.1.2]{johnst}, the category of frames is monadic over $\Set$ -- therefore the free frames form its corresponding Kleisli subcategory. By \cite[Example 4.3]{day}, this subcategory is dense. The proof is finished.

\section{Conclusions and future work}

This paper, in the spirit of Leinster's before it, shows that concepts like Stone spaces or sober spaces are “inevitable” -- that is, they arise naturally by inputting “ordinary” mathematical notions such as finite sets or topological spaces into some standard categorial machinery. It remains to be seen whether the same technique can be applied in other contexts, where conceptual arguments do not necessarily transfer -- for example, categories of ringed or ordered spaces.

\section{Acknowledgements}

I would like to thank the anonymous reviewer for pointing out the alternate proof I outlined earlier.

This work was supported by a grant of the Romanian National Authority for Scientific Research, CNCS - UEFISCDI, project number PN-II-ID-PCE-2011-3-0383.


\begin{thebibliography}{99}

\bibitem{avery}
   T. Avery.
   Codensity and the Giry monad.
   \emph{Journal of Pure and Applied Algebra},
   Vol. 220, Issue 3, 1229--1251,
   2016.

\bibitem{bellslom}
    J. L. Bell and A. B. Slomson.
    \emph{Models and Ultraproducts: An Introduction}.
    North-Holland Publishing Company,
    1971.
    
\bibitem{Bor2}
    F. Borceux,
    \emph{Handbook of Categorical Algebra 2: Categories and Structures}.
    Cambridge University Press,
    1994.

\bibitem{day}
    B. J. Day,
    Density presentations of functors.
    \emph{Bull. Austral. Math. Soc.},
    16:427--448,
    1977.

\bibitem{gh}
    F. Galvin and A. Horn.
    Operations preserving all equivalence relations.
    \emph{Proceedings of the American Mathematical Society},
    24:521--523,
    1970.
    
\bibitem{isbell}
    J. Isbell,
    Adequate subcategories.
    \emph{Illinois J. Math.} 4:541--552,
    1960.
    
\bibitem{eleph}
    P. T. Johnstone.
    \emph{Sketches of an Elephant: A Topos Theory Compendium}.
    Oxford University Press,
    2002.

\bibitem{johnst}
    P. T. Johnstone.
    \emph{Stone Spaces}.
    Cambridge University Press,
    1982.

\bibitem{kg}
    J. F. Kennison and D. Gildenhuys.
    Equational completion, model induced triples and pro-objects.
    \emph{Journal of Pure and Applied Algebra},
    1:317--346,
    1971.

\bibitem{kock}
  A. Kock.
  Continuous Yoneda representation of a small category.
  Aarhus University preprint,
  1966.

\bibitem{leincat}
   T. Leinster.
   Where Do Monads Come From?
   \url{http://golem.ph.utexas.edu/category/2012/09/where_do_monads_come_from.html},
   2012.

\bibitem{lein}
   T. Leinster.
   Codensity and the ultrafilter monad.
   \emph{Theory and Applications of Categories},
   28:332--370,
   2013.

\bibitem{sturtz}
   K. Sturtz.
   Categorical Probability Theory.
   \href{http://arxiv.org/abs/1406.6030}{arXiv:1406.6030},
   2014.

\end{thebibliography}
\end{document}